\documentclass{article}
\usepackage{amsmath,amssymb,amsthm,verbatim,algorithm2e,url}

\begin{document}

\theoremstyle{plain}
\newtheorem{thm}{Theorem}[section]
\newtheorem{lemma}[thm]{Lemma}
\newtheorem{prop}[thm]{Proposition}
\newtheorem{cor}[thm]{Corollary}

\title{Cryptanalysis of three matrix-based key establishment protocols}
\author{Simon R. Blackburn, Carlos Cid and Ciaran Mullan\thanks{This work was supported by E.P.S.R.C. PhD studentship EP/P504309/1.}\\
Information Security Group,\\Royal Holloway, University of London, \\ Egham, Surrey, TW20 0EX, United Kingdom. \\\texttt{\{s.blackburn,carlos.cid,c.mullan\}@rhul.ac.uk}}
\date{\today}

\newcommand{\A}{\mathcal{A}}
\newcommand{\B}{\mathcal{B}}
\newcommand{\G}{\mathcal{G}}
\renewcommand{\L}{\mathcal{L}}
\newcommand{\U}{\mathcal{U}}
\newcommand{\bF}{\mathbb{F}}
\newcommand{\bZ}{\mathbb{Z}}
\newcommand{\SL}{\mathrm{SL}}
\newcommand{\GL}{\mathrm{GL}}
\newcommand{\Mat}{\mathrm{Mat}}
\newcommand{\bM}{\begin{smallmatrix}}
\newcommand{\eM}{\end{smallmatrix}}

\maketitle
\begin{abstract}
We cryptanalyse a matrix-based key transport protocol due to Baumslag, Camps, Fine, Rosenberger and Xu from 2006.
We also cryptanalyse two recently proposed matrix-based key agreement protocols, due to Habeeb, Kahrobaei and Shpilrain, and due to Romanczuk and Ustimenko.
\end{abstract}

\section{Introduction}
Regular proposals are made to employ groups in cryptography; see for example the survey article by Blackburn et al~\cite{Blackburn} or the book by Myasnikov et al~\cite{Myasnikov}.
In particular, matrix groups are often considered because matrices are easy to represent and manipulate.
However, such proposals generally have a poor reputation: we are unaware of any fully specified proposals that are widely regarded as secure.

In this paper we cryptanalyse a matrix-based key transport protocol due
to Baumslag, Camps, Fine, Rosenberger and Xu~\cite{Baumslag}, which we refer to as the BCFRX scheme.
In fact, their proposal is more general and they suggest several platform groups; we consider their only matrix group proposal.
We cryptanalyse this scheme in a very strong sense.
We show that for practical parameter sizes a passive adversary can feasibly recover the session key after observing just one run of the protocol.
We find an even more efficient attack if two or more runs of the protocol are observed.
Our techniques reduce the problem of breaking the scheme to a sequence of feasible Gr\"{o}bner basis computations.
This work constitutes Section 2.

We also cryptanalyse two recently proposed matrix-based key agreement protocols, due to Habeeb, Kahrobaei and Shpilrain (HKS)~\cite{HKS}, and due to Romanczuk and Ustimenko (RU)~\cite{Romanczuk}.
These schemes both fail due to straightforward linearisation attacks.
This work constitutes Sections 3 and 4.

\section{The BCFRX Scheme}
We begin by describing the BCFRX scheme.
The protocol assumes that Alice and Bob a priori share some secret information, namely their long-term secret key.
The goal of the protocol is for Alice and Bob to establish a session key for subsequent cryptographic use.
To achieve this, Bob chooses the session key and sends it to Alice in three passes, as follows.

Let $\G$ be a finitely presented group.
Let $\A$ and $\B$ be two commuting subgroups of $\G$ (so $AB=BA$ for all $A\in\A$ and $B\in\B$).
The group $\G$ is made public and the subgroups $\A$ and $\B$ form Alice and Bob's long-term secret key.
Then:
\begin{itemize}
\item Bob chooses a session key $K \in \G$ and elements $B,B'\in \B$.
He sends $C:=BKB'$ to Alice.
\item Alice picks elements $A,A' \in \A$ and sends $D:=ACA' = ABKB'A'$
to Bob.
\item Since $\A$ and $\B$ commute, we have that $ABKB'A' = BAKA'B'$.
Bob sends $E:=B^{-1}D{B'}^{-1}=AKA'$ to Alice.
\item Alice computes $K = A^{-1}E{A'}^{-1}$.
\end{itemize}

We can think of this protocol as Shamir's three-pass (or no-key) protocol~\cite[Protocol~12.22, Page~500]{MOV}, with the operation of multiplying on the left and right by a group element replacing the exponentiation operation.

There was no detailed discussion of security in~\cite{Baumslag}, but we need to specify a security model and what it means to break the protocol, in order to cryptanalyse it.
We will consider the weakest possible notion of security: the passive adversary model.
So we will regard the protocol as broken if we can construct an adversary that can feasibly compute the session key, after eavesdropping on one or more runs of the protocol; this adversary must perform well for practical parameter sizes.

Baumslag et al.~\cite{Baumslag} suggested several abstract platform groups to serve for~$\G$.
But in this paper we concentrate on their only matrix group proposal: $\G = \SL_4(\bZ)$, the group of invertible $4\times 4$ matrices of determinant 1 over the integers.
It was proposed that the commuting subgroups $\A$ and $\B$ should be constructed as follows.
Writing $I_2$ for the $2\times 2$ identity matrix, define the subgroups $\U$ and $\L$ of $\G$ by
\begin{equation}
\label{eqn:UL_definition}
\U=\begin{pmatrix}
\SL_2(\bZ)&0\\
0&I_2
\end{pmatrix}
\text{ and }
\L=\begin{pmatrix}
I_2&0\\
0&\SL_2(\bZ)
\end{pmatrix}.
\end{equation}
Let $M\in \SL_4(\bZ)$ be a secret matrix known to both Alice and Bob. Then we define
\begin{equation}
\label{eqn:AB_definition}
\A = M^{-1} \U M \text{ and } \B = M^{-1}\L M.
\end{equation}
We may thus view the long-term secret key as the matrix $M$.

As described the proposal is not yet fully specified, since it remains to specify how the long-term secret key $M$ is chosen, and how the protocol chooses elements from $\A$ and $\B$ at various points.
It was stated in Baumslag et al.~\cite{Baumslag} that elements are picked randomly from $\A$ and $\B$, and we presume that the matrix $M$ is picked in a similar fashion from $\G = \SL_4(\bZ)$.
But since the group $\G$ and its subgroups $\A,\B$ are infinite, the meaning of the word random is unclear in this context.
Any practical cryptanalysis will depend on the details of how these random choices are made; however the cryptanalysis we give below will work for any efficient method for making these random choices that we can think of.

In any fully specified implementation of the protocol, there exists an integer $\Lambda$ such that the entries of all matrices generated in the protocol lie in the interval $(-\Lambda/2,\Lambda/2)$.
Since the standard way to represent a $4\times 4$ integer matrix of this form uses approximately $16\log_2 \Lambda$ bits, it is natural to think of $\log_2 \Lambda$ as the security parameter of the scheme.

\subsection*{A cryptanalysis}

Our cryptanalysis proceeds in three stages.
In Stage~1, we argue that integer computations may be replaced by computations modulo $p$ for various small primes $p$.
In Stage~2 we show that knowledge of a matrix $N$ of a restricted form allows a passive adversary to compute any session key transmitted under the scheme.
Finally, in Stage~3, we show that this matrix $N$ may be computed in practice.
None of these stages is rigorous (though Stage~2 may be made so), but the stages all work well in practice.

\subsubsection*{Stage~1: Working modulo $p$}

Suppose an adversary wishes to discover a session key $K$.
Since the entries of $K$ lie in the interval between $-\Lambda/2$ and $\Lambda/2$, it is enough to find $K \bmod{n}$ for any $n>\Lambda$.
Indeed, this is how we approach our cryptanalysis.
We will show (see Stages~2 and~3 below) that in practice we may efficiently compute $K\bmod{p_i}$ for small primes $p_i$ of our choice.
(We are thinking of $p_i$ as a prime of between 80 and 300 bits in length: in some sense quite large, but in general smaller than $\Lambda$.)
We run this computation for several different primes $p_i$ until $\prod{p_i}>\Lambda$.
Setting $n = \prod{p_i}$, we can then appeal to the Chinese remainder theorem to calculate $K\bmod{n} = K$.

We write this more precisely as follows.
Let $T$ be a fully specified version of the BCFRX protocol, with $\SL_4(\bZ)$ as a platform.
For a prime $p$, let $\bZ_p$ be the integers modulo $p$.
Let $T_p$ be the BCFRX protocol under the platform group $\G=\SL_4(\bZ_p)$, defined as follows.
We identify the subgroups $\U$ and $\L$ defined by~\eqref{eqn:UL_definition} with their images in $\SL_4(\bZ_p)$.
Let the subgroups $\A$ and $\B$ be chosen to be of the form~\eqref{eqn:AB_definition} for some matrix $M\in\G$ chosen uniformly at random.
Let the protocol pick all elements from $\A$ and $\B$ uniformly and independently at random.
This makes sense since $\G$ is finite. We use $T_p$ to model the protocol $T$ taken modulo $p$.
This model is not quite accurate: for example, it is almost certain that when $M\in\SL_4(\bZ)$ is chosen according to the method specified in $T$, the distribution of $M\bmod p$ will not be quite uniform in $\SL_4(\bZ_p)$.
But for all ways we can think of in which $T$ can be specified, the protocol $T_p$ is a good model for $T$ taken modulo $p$ (in the sense that an adversary that succeeds in practice to recover the session key generated by $T_p$ will also succeed in practice to recover $K \bmod p$ when presented with the matrices from a run of the protocol $T$). Note that an adversary has great freedom in choosing $p$, which makes the reduction to $T_p$ difficult to design against.
The fact (see below) that the session key for $T_p$ can be feasibly computed in practice shows that $T$ is insecure.

\subsubsection*{Stage~2: Restricting the long-term key}

We consider the protocol $T_p$ over $\SL_4(\bZ_p)$ defined above.
From now on, let us write an arbitrary $4\times 4$ matrix $Z$ in block form as $Z = \left( \bM Z_{11} & Z_{12} \\ Z_{21} & Z_{22} \eM \right)$, for the obvious $2\times 2$ submatrices $Z_{ij}$ of $Z$.

The following lemma shows that there are many equivalent long-term keys for the protocol $T_p$.

\begin{lemma}
\label{lem:equiv_keys}
Let $M\in \SL_4(\bZ_p)$ be the long-term key shared by Alice and Bob, and define subgroups $\A$ and $\B$ by $\A= M^{-1}\U M$ and $\B= M^{-1}\L M$.
Let $N\in \GL_4(\bZ_p)$ be any matrix such that $N^{-1}\U N =\A$ and $N^{-1}\L N =\B$.
If $N$ is known, then any session key can be efficiently computed by a passive adversary.
\end{lemma}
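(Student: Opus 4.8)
The plan is to show that knowledge of $N$ lets the adversary mimic the role of both Alice and Bob. The key observation is that Alice's contribution to the protocol is multiplication on the left and right by elements of $\A = N^{-1}\U N$, and Bob's by elements of $\B = N^{-1}\L N$; since $N$ is known, the adversary can generate arbitrary elements of $\A$ and $\B$ by conjugating explicit elements of $\U$ and $\L$ by $N$. So the first step is to observe that the adversary, upon seeing the transmitted matrix $C = BKB'$, can strip away Bob's masking: writing $C = N^{-1}L N \cdot K \cdot N^{-1}L' N$ for some $L,L'\in\L$, the adversary computes $NCN^{-1} = L\,(NKN^{-1})\,L'$, which conjugates the problem into one about $\U$ and $\L$ acting on $NKN^{-1}$.

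Next I would exploit the block structure. Conjugating by $N$ puts the masking matrices into the standard lower-triangular form $\L = \left(\begin{smallmatrix} I_2 & 0 \\ 0 & \SL_2(\bZ_p)\end{smallmatrix}\right)$, whose elements fix the top two rows and the left two columns in a controlled way. Setting $W := NKN^{-1}$ and writing $NCN^{-1}$ in block form $\left(\begin{smallmatrix} * & * \\ * & * \end{smallmatrix}\right)$, the action of $L$ on the left and $L'$ on the right leaves the $(1,1)$ block $W_{11}$ completely unchanged, since the $\SL_2$ factors only touch the bottom-right corner. Thus the adversary reads off $W_{11}$ directly from the observed $C$. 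The remaining task is to recover the full matrix $W$, hence $K = N^{-1}WN$, from partial block information together with the other transmitted matrices $D$ and $E$, using the analogous fact that the upper subgroup $\U$ leaves the $(2,2)$ block invariant. Combining the invariants extracted from all three passes should pin down enough blocks of $W$ to reconstruct it.

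The step I expect to be the main obstacle is verifying that the invariant blocks harvested from $C$, $D$ and $E$ genuinely determine $W$ uniquely, rather than leaving an ambiguity fixed by the commuting $\SL_2$ factors. One must check that the combined left-right actions of $\U$ and $\L$ on $W$ have a trivial joint stabiliser modulo the information the adversary can observe — essentially a linear-algebra bookkeeping argument over $\bZ_p$ to confirm that no block of $W$ escapes detection across the three passes. Once that is established, every computation involved is matrix multiplication and inversion over $\bZ_p$, which is polynomial in the matrix size and in $\log p$, so the recovered key is obtained efficiently. This is presumably why the authors remark that Stage~2, unlike the others, \emph{may be made rigorous}.
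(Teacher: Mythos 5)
You have correctly reproduced the paper's setup: conjugate everything by $N$, set $W:=NKN^{-1}$, and note that the $C$-pass reveals $W_{11}$ (since left/right multiplication by elements of $\L$ fixes the $(1,1)$ block) while the $E$-pass reveals $W_{22}$. This is exactly the paper's first step (with $C'=NCN^{-1}=SWS'$, $E'=NEN^{-1}=RWR'$, giving $W_{11}=C'_{11}$ and $W_{22}=E'_{22}$). But your proof stops precisely where the real work begins: the off-diagonal blocks $W_{12}$ and $W_{21}$ are \emph{never} invariant in any single pass, since in block form $C'_{12}=W_{12}S'_{22}$, $D'_{12}=R_{11}W_{12}S'_{22}$ and $E'_{12}=R_{11}W_{12}$, with $R_{11},S'_{22}$ unknown invertible $2\times 2$ matrices. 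So the strategy of ``harvesting invariant blocks'' cannot reach $W_{12}$ or $W_{21}$, and your proposed fix --- checking that the joint stabiliser is trivial ``modulo observable information'' --- is not an algorithm and is in fact the wrong question: each pass separately does have a nontrivial stabiliser, and what is needed is a way to \emph{combine} the passes.

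The missing idea in the paper is short but essential. Solve the linear system $XD'_{12}=C'_{12}$ for a $2\times 2$ matrix $X$; a solution exists because $X=R_{11}^{-1}$ works. Any solution satisfies $XR_{11}W_{12}S'_{22}=W_{12}S'_{22}$, hence $XR_{11}W_{12}=W_{12}$ since $S'_{22}$ is invertible, and therefore $XE'_{12}=XR_{11}W_{12}=W_{12}$. Note that uniqueness of $X$ is neither needed nor guaranteed (it fails when $W_{12}$ is singular), yet every solution yields the same, correct $W_{12}$ --- so your worry about ambiguity dissolves once the argument is set up this way. Symmetrically, any $Y$ with $D'_{21}Y=C'_{21}$ gives $E'_{21}Y=W_{21}$. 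With all four blocks of $W$ in hand, $K=N^{-1}WN$, and every step is linear algebra over $\bZ_p$, so the attack is efficient. Without this cross-pass elimination step your proposal is an incomplete sketch, not a proof: it recovers only half the key and explicitly defers the other half to an unproven claim.
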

\begin{proof}
An adversary is presented with matrices $C$, $D$ and $E$ that are transmitted as part of the protocol.
We have that $C=BKB'$, $D=ABKB'A'$ and $E=AKA'$ for some unknown matrices $A,A'\in \A$ and $B,B'\in \B$.
Suppose that the adversary is also able to obtain a matrix $N$ satisfying the conditions of the lemma.
Since $A,A'\in\A$ we may write $A=N^{-1}RN$ and $A'=N^{-1}R'N$ for some unknown matrices $R,R'\in\U$.
Similarly we may write $B=N^{-1}SN$ and $B'=N^{-1}S'N$ for some unknown matrices $S,S'\in\L$.

Define an (unknown) matrix $K'$ by $K'=NKN^{-1}$.
Define matrices $C'$, $D'$, $E'$ by
\begin{align*}
C'&:=NCN^{-1}=SK'S',\\
D'&:=NDN^{-1}=RSK'S'R'\text{ and }\\
E'&:=NEN^{-1}=RKR'.
\end{align*}
Note that the adversary can compute $C'$, $D'$ and $E'$.

Using the fact that $S,S'\in\L$ and $R,R'\in\U$, we may write
\begin{align*}
C'&=\begin{pmatrix}
K'_{11}&K'_{12}S'_{22}\\
S_{22}K'_{21}&S_{22}K'_{22}S'_{22}
\end{pmatrix},\\
D'&=\begin{pmatrix}
R_{11}K'_{11}R'_{11}&R_{11}K'_{12}S'_{22}\\
S_{22}K'_{21}R'_{11}&S_{22}K'_{22}S'_{22}
\end{pmatrix}\text{ and }\\
E'&=\begin{pmatrix}
R_{11}K'_{11}R'_{11}&R_{11}K'_{12}\\
K'_{21}R'_{11}&K'_{22}
\end{pmatrix}.
\end{align*}
Clearly $K'_{11}$ is known to the adversary, since $K'_{11}=C'_{11}$.
Moreover, $K'_{22}$ is known since $K'_{22}=E'_{22}$.

To compute $K'_{12}$, find any matrix $X$ such that $XD'_{12} = C'_{12}$ (note there may be more than one such $X$ if $K'_{12}$ is noninvertible).
This implies $XR_{11}K'_{12}=K'_{12}$, since $S'_{22}$ is invertible.
Thus an adversary can compute $XE'_{12}=K'_{12}$.
Similarly, to compute $K'_{21}$ find any matrix $Y$ such that $D'_{21}Y = C'_{21}$.
This implies $K'_{21}R'_{11}Y=K'_{21}$ and an adversary can compute $E'_{21}Y=K'_{21}$.

Once $K'$ is known, the session key $K$ may be recovered since $K=N^{-1}K'N$.
\end{proof}

Let $\Mat_{2}(\bZ_p)$ be the set of $2\times 2$ matrices over $\bZ_p$.
Let $\mathcal{I}\subseteq\Mat_2(\bZ_p)$ be defined by
\[
\mathcal{I}=\left\{ \begin{pmatrix} 1&0\\
0&1\end{pmatrix},\begin{pmatrix}1&0\\0&0\end{pmatrix},
\begin{pmatrix}0&0\\0&0\end{pmatrix}\right\}.
\]
We say that $N\in \GL_4(\bZ_p)$ is of \emph{restricted form} if $N_{11},N_{22}\in \mathcal{I}$.

\begin{lemma}
\label{lem:restricted_form}
For any long-term key $M$ used in the protocol $T_p$, there is a matrix $N$ of restricted form satisfying the conditions of Lemma~\ref{lem:equiv_keys}.
Moreover, for an overwhelming proportion of long-term keys $M$, we may impose the condition that $N_{11}=N_{22}=I_2$, where $I_2$ is the $2\times 2$ identity matrix.
\end{lemma}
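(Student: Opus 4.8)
The plan is first to describe \emph{all} matrices $N$ meeting the hypotheses of Lemma~\ref{lem:equiv_keys}, and then to use the freedom in that description to force $N_{11},N_{22}\in\mathcal{I}$. The two conditions $N^{-1}\U N=\A=M^{-1}\U M$ and $N^{-1}\L N=\B=M^{-1}\L M$ can be rewritten, after multiplying by $M$ on the left and $M^{-1}$ on the right, as $P^{-1}\U P=\U$ and $P^{-1}\L P=\L$, where $P:=NM^{-1}$. Hence $N$ satisfies the conditions of Lemma~\ref{lem:equiv_keys} if and only if $P$ lies in the joint normaliser $\mathcal{N}:=N_{\G}(\U)\cap N_{\G}(\L)$ taken inside $\GL_4(\bZ_p)$, so the admissible $N$ are exactly the matrices $PM$ with $P\in\mathcal{N}$. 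Since $M$ itself corresponds to $P=I$, this set is nonempty.

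The crucial structural step is to determine $\mathcal{N}$, and I claim it is precisely the block-diagonal subgroup $\{\,\mathrm{diag}(P_{11},P_{22}):P_{11},P_{22}\in\GL_2(\bZ_p)\,\}$. To see that every normalising $P$ is block diagonal I would argue with fixed spaces. For any prime $p$ the group $\SL_2(\bZ_p)$ has no nonzero common fixed vector, so the subspace of $\bZ_p^4$ fixed pointwise by $\U$ is exactly $V_2=\{(0,0,x,y)\}$, and likewise the fixed space of $\L$ is $V_1=\{(x,y,0,0)\}$. If $P$ normalises $\U$ then for $v\in V_2$ and any $u\in\U$ we have $u(Pv)=P(P^{-1}uP)v=Pv$ because $P^{-1}uP\in\U$ fixes $v$; thus $PV_2=V_2$, and similarly $PV_1=V_1$. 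A matrix fixing both $V_1$ and $V_2$ setwise is block diagonal. Conversely any $\mathrm{diag}(P_{11},P_{22})$ normalises both subgroups, since conjugating the $\SL_2$-block by an element of $\GL_2(\bZ_p)$ preserves determinants and hence $\SL_2(\bZ_p)$.

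Combining the two steps, every admissible $N=PM$ satisfies $N_{11}=P_{11}M_{11}$ and $N_{22}=P_{22}M_{22}$, where $P_{11},P_{22}$ range independently over $\GL_2(\bZ_p)$ and $N$ is automatically invertible. The problem has therefore collapsed to a purely local one: choose $P_{ii}\in\GL_2(\bZ_p)$ so that left multiplication sends $M_{ii}$ into $\mathcal{I}$. For the second (overwhelming-proportion) assertion this is immediate. When $M$ is drawn uniformly from $\SL_4(\bZ_p)$, all but an $O(1/p)$ fraction of keys have both diagonal blocks $M_{11},M_{22}$ invertible; for such $M$ one simply takes $P_{ii}=M_{ii}^{-1}$ and obtains $N_{11}=N_{22}=I_2$.

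For the first assertion I would reduce each diagonal block $M_{ii}$ to a canonical form under left multiplication, organised by rank: rank $2$ gives $I_2$ and rank $0$ gives the zero block, both of which lie in $\mathcal{I}$. The rank-one case is where I expect the real difficulty. Left multiplication by an invertible matrix preserves the row space of a block, so the matrices reachable from a rank-one $M_{ii}$ are precisely the rank-one matrices with that same row space; their reduced row echelon representatives are $\left(\bM 1&t\\0&0\eM\right)$ and $\left(\bM 0&1\\0&0\eM\right)$, only the first of which (with $t=0$) coincides with the element $\left(\bM 1&0\\0&0\eM\right)$ of $\mathcal{I}$. Reconciling a general rank-one block with this single representative is the crux of the first statement of the lemma, and it is the step I would treat with the most care; by contrast the overwhelming-proportion case needs none of this, since there both diagonal blocks are invertible and $N_{11}=N_{22}=I_2$ can be imposed directly, and it is this generic case that actually breaks the scheme.
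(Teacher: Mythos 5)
Your proposal is correct as far as it goes, and it takes a genuinely stronger route than the paper. The paper's proof merely \emph{exhibits} one admissible $N$: it posits a function $f:\Mat_2(\bZ_p)\rightarrow\GL_2(\bZ_p)$ with $f(X)X\in\mathcal{I}$ for all $X$ (asserted to come from row reduction), sets $H=\left(\bM f(M_{11})&0\\0&f(M_{22})\eM\right)$ and $N:=HM$, and checks that block-diagonal $H$ normalises $\U$ and $\L$. You instead classify \emph{all} admissible $N$ as $PM$ with $P$ in the joint normaliser of $\U$ and $\L$, and your fixed-space argument correctly identifies that normaliser as the block-diagonal subgroup $\GL_2(\bZ_p)\times\GL_2(\bZ_p)$ (the paper only uses the easy inclusion). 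In the generic case the two arguments coincide, taking $P_{ii}=M_{ii}^{-1}$ to get $N_{11}=N_{22}=I_2$, and your proof of the second assertion matches the paper's.

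The step you flagged as the crux is not a difficulty to be overcome: it is a counterexample, and your own classification proves it. Since every admissible $N$ satisfies $N_{11}=P_{11}M_{11}$ with $P_{11}$ invertible, the row space of $N_{11}$ equals that of $M_{11}$; if $M_{11}=\left(\bM 0&1\\0&0\eM\right)$ (which occurs, for instance, when $M$ is a sign-adjusted $4$-cycle permutation matrix in $\SL_4(\bZ_p)$), then no admissible $N$ has $N_{11}\in\mathcal{I}$, because no element of $\mathcal{I}$ has row space spanned by $(0,1)$. So the first assertion of the lemma is false as stated, and the paper's proof fails at precisely this point: the function $f$ it claims to exist does not, since row reduction of a rank-one matrix yields $\left(\bM 1&t\\0&0\eM\right)$ or $\left(\bM 0&1\\0&0\eM\right)$, which lie in $\mathcal{I}$ only when $t=0$. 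The statement can be repaired by enlarging $\mathcal{I}$ to a full transversal of the left $\GL_2(\bZ_p)$-orbits on $\Mat_2(\bZ_p)$ (one representative per row space, hence $p+3$ matrices), at the cost of inflating the exhaustive search in Stage~3; but since rank-deficient diagonal blocks occur for only an $O(1/p)$ proportion of keys, the second assertion --- which you established exactly as the paper does --- is what actually carries the attack. You should convert your closing hedge into the explicit observation that the rank-one case refutes the first claim, rather than leaving it as an open step.
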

\begin{proof}
Let $f:\Mat_2(\bZ_p)\rightarrow \GL_2(\bZ_p)$ be a function such that
$f(X)X\in \mathcal{I}$ for all $X\in \Mat_2(\bZ_p)$. Such a function $f$ certainly exists: it can be derived from a standard row reduction algorithm.

Define
\[
H:=\begin{pmatrix}
f(M_{11})&0\\
0&f(M_{22})
\end{pmatrix}\text{ and }N:=HM.
\]

The definition of $H$ means that $N_{11},N_{22}\in \mathcal{I}$, and so $N$ is of restricted form.
Also, any matrix
\[
H\in\begin{pmatrix}
\GL_2(\bZ_p)&0\\
0&\GL_2(\bZ_p)
\end{pmatrix}
\]
has the property that $H^{-1}\U H=\U$ and $H^{-1}\L H=\L$. So
\[
N^{-1}\U N =M^{-1}H^{-1}\U HM = M^{-1} \U M=\A
\]
and similarly $\B=N^{-1}\L N$.
So the main statement of the lemma is proved.
To see why the last statement of the lemma holds, note that for an overwhelming proportion of long-term keys $M$ we have that $M_{11}$ and $M_{22}$ are invertible. The function $f$ maps any invertible matrix to its inverse, and so $N_{11}=N_{22}=I_2$ in this case.
\end{proof}

\subsubsection*{Stage~3: Computing the matrix $N$}

We may compute an equivalent long-term key $N$ of restricted form as follows.
After eavesdropping on a run of the protocol, we know the matrices $C,D,$ and $E$.
We also know a matrix $N$ of restricted form must satisfy the equations
\begin{align}
 NDN^{-1} &= RNCN^{-1}R', \label{eqn:C,D}\\
 NDN^{-1} &= SNEN^{-1}S', \label{eqn:E,D}\\
 NN^{-1} &= I_4, \label{eqn:inv}
\end{align}
for unknown matrices $R,R'\in \U$ and $S,S'\in \L$.
Since $N$ is of restricted form we have $N_{11},N_{22}\in \mathcal{I}$.
There are thus only 9 possible combinations for $N_{11}$ and $N_{22}$, so we may perform a trivial exhaustive search to find the right combination. (In practice we would first try $N_{11}=N_{22}=I_2$, since this holds with overwhelming probability.)
We assign variables $x_1,\ldots ,x_8$ for the remaining unknown entries of $N$, and $x_9,\ldots ,x_{24}$ for the unknown entries of $N^{-1}$.

Expanding~(\ref{eqn:C,D}) and~(\ref{eqn:E,D}), we find $${(NDN^{-1})}_{22}={(NCN^{-1})}_{22}, \;\; {(NDN^{-1})}_{11}={(NEN^{-1})}_{11}.$$
This gives us $4+4=8$ quadratic equations in the $x_i, i=1,\ldots ,24$.
Adding the 16 quadratic equations from~(\ref{eqn:inv}), we have a system of 24 quadratic equations in 24 unknowns and expect a Gr\"{o}bner basis calculation to reveal $N$.
If we eavesdrop on a second run of the protocol, we learn 8 new equations (from~(\ref{eqn:C,D}) and~(\ref{eqn:E,D})) and expect to compute $N$ even more efficiently.

\subsubsection*{Experimental results}

Over 1,000 trials using Magma~\cite{Magma} Version~2.16-11 on a Intel Core 2 Duo 1.86GHz desktop, it took roughly 12 seconds to compute each (lex ordered) Gr\"{o}bner basis for a random 300-bit prime.
In all our experiments, twenty three of the basis elements had the form $x_i + f_i(x_{24})$ for $i = 1,\ldots ,23,$ where $f_i$ is a polynomial of degree 5.
The final basis element was a degree 6 polynomial in $x_{24}$.
Thus in all our cases we had a maximum of six possibilities for a matrix $N$ of restricted form satisfying Lemma~\ref{lem:equiv_keys}.

If we eavesdrop on a second run of the protocol, we can add 8 new equations (or just one of the 8 new equations) to our system.
A Gr\"{o}bner basis calculation then reveals a unique value for $N$.

\section{The HKS Scheme}

Next we turn our attention to a key agreement protocol proposed by
Habeeb, Kahrobaei and Shpilrain~\cite{HKS}.
Our description of the scheme is somewhat simplified from~\cite{HKS}.

Let $A$ be a group and let $B$ be an abelian group.
Let $\mathrm{Aut}(B)$ denote the automorphism group of $B$,
and let $A,B,\mathrm{Aut}(B),a\in A, b\in B, n\in \mathbb{N}$ be public. Then
\begin{itemize}
 \item Alice picks an embedding $\psi :A\rightarrow \mathrm{Aut}(B)$ and sends\\ $x = \psi(a)(b)\psi(a^2)(b)\ldots \psi(a^{n-1})(b)$ to Bob.
 \item Bob picks an embedding $\phi :A\rightarrow \mathrm{Aut}(B)$ and sends\\ $y = \phi(a)(b)\phi(a^2)(b)\ldots \phi(a^{n-1})(b)$ to Alice.
 \item Alice computes
 \[
 k_A=\prod_{i=1}^{n-1}\psi(a^i)(y)=\prod_{i=1}^{n-1}\prod_{j=1}^{n-1}\psi(a^i)\phi(a^j)(b).
  \]
  \item Bob computes
  \[
  k_B=\prod_{i=1}^{n-1}\phi(a^i)(x)=\prod_{i=1}^{n-1}\prod_{j=1}^{n-1}\phi(a^i)\psi(a^j)(b).
  \]
\end{itemize}
We require that Alice and Bob pick $\psi$ and $\phi$ so that they commute. If this is done, Alice and Bob have computed a common shared key $k=k_A=k_B$.

The proposal~\cite{HKS} suggests to take $A$ to be a $p$-group (a group of order $p^l$ for some $l\in \mathbb{N}$ and prime $p$) and $B$ to be an elementary abelian $p$-group of order $p^m$.
Thus $B$ may be viewed as an $m$-dimensional vector space over $\mathbb{F}_p$, and so $\mathrm{Aut}(B) = \GL_m(\bF_p)$.
With this choice of platform groups, we can view the protocol as follows.

Define $f(x)=x+x^2+\cdots+x^{n-1}$.
Let $b$ be an $m$-dimensional column vector over $\bF_p$.
Alice and Bob choose private $m\times m$ matrices $J$ and $K$ respectively, using some method so that $f(J)$ and $f(K)$ commute.
In general, and a little more formally, $J=M_A(r_A)$ and $K=M_B(r_B)$ where $M_A$ and $M_B$ are public algorithms which take as input random sequences of coin tosses $r_A$ and $r_B$ respectively (in addition to the public parameters of the scheme).
The algorithms must have the property that the matrices $f(M_A(r_A))$ and $f(M_B(r_b))$ commute for all input sequences $r_A$ and $r_B$ respectively.
The paper~\cite{HKS} suggests some candidates for $M_A$ and $M_B$, but we do not make use of the details of these algorithms in this cryptanalysis.

Alice transmits the column vector $w_A=f(J)b$ to Bob. Bob transmits the column vector $w_B=f(K)b$ to Alice.
The common key $k$ is the column vector defined by
\[
k=f(J)f(K)v=f(J)w_B=f(K)w_A,
\]
the last equality following since $f(J)$ and $f(K)$ commute.

\subsection*{A cryptanalysis}

Suppose an adversary Eve receives $w_A,w_B$ and the public parameters of the scheme.

Let $X$ be any matrix such that $Xb=w_A$, and $X$ commutes with $f(L)$ for all matrices $L$ that can possibly be generated by Bob.
Such a matrix exists since $X=f(J)$ satisfies these conditions.

Note that the conditions on $X$ are linear conditions on the unknown entries of $X$.
This is clear for the condition that $Xb=w_A$.
The commutator condition can be expressed as $Xf(L)=f(L)X$, for matrices $L$ output by the algorithm $M_B$.
To compute the commutator condition on $X$, Eve can run $M_B$ on some random inputs $r_E$ to find suitable matrices $f(L)$ and impose the necessary conditions $Xf(L)=f(L)X$ on $X$.
Since these conditions are linear, the number of random inputs $r_E$ that is required before these necessary conditions become sufficient to imply the commutator condition (at least for an overwhelming proportion of runs of the protocol) is very small.

Since all the conditions on $X$ are linear and easy to find, a suitable matrix $X$ can be computed efficiently.

We claim that $k=Xw_B$. To see this, observe that
\[
Xw_B=Xf(K)b=f(K)Xb=f(K)w_A=f(K)f(J)b=f(J)f(K)b=k.
\]
This means that the adversary can generate the shared key, and the scheme is broken.

\section{The RU Scheme}

We now cryptanalyse a recent key agreement protocol proposed by Romanczuk and Ustimenko~\cite{Romanczuk}.
The protocol works as follows.

Let $\GL_n(\bF_q)$ denote the group of invertible $n\times n$ matrices over a finite field $\bF_q$ of order $q$, and let $\bF_q[x,y]$ denote the polynomial ring over $\bF_q$ in two variables $x$ and $y$.
Let $C,D\in \GL_n(\bF_q)$ be two commuting matrices and let $d\in \bF^n_q$.
The matrices $C,D$ and the vector $d$ are made public.

To agree on a shared key, Alice picks a polynomial $f_A(x,y)\in \bF_q[x,y]$ and sends $w_A= f_A(C,D)d$ to Bob.
Likewise Bob picks a polynomial $f_B(x,y)\in \bF_q[x,y]$ and sends $w_B = f_B(C,D)d$ to Alice.
Alice computes $k_A = f_A(C,D)w_B$, Bob computes $k_B = f_B(C,D)w_A$.
Since $C$ and $D$ commute, the same is true for $f_A(C,D)$ and $f_B(C,D)$, and so their shared key is the vector $k:=k_A=k_B$.

It was not fully specified how the matrices $C,D$ and the polynomials $f_A, f_B$ are generated.
However, the following cryptanalysis applies to any method of generation.

\subsection*{A cryptanalysis}

Suppose a passive adversary Eve receives $w_A,w_B$, and the public quantities $C,D$ and $d$.
Let $X$ be any matrix such that $$XC = CX,\;\;XD=DX,\;\; Xd = w_A.$$
Note that such a matrix exists, since $X = f_A(C,D)$ satisfies these conditions.
Since the conditions on $X$ are all linear, such a matrix is easily found.
Eve can then compute the key as: $$Xw_B = Xf_B(C,D)d = f_B(C,D)Xd = f_B(C,D)w_A = k.$$

\end{document}